\documentclass[12pt]{article}
\usepackage{amsthm}

\usepackage[english]{babel}

\usepackage{etoolbox}
\AtBeginEnvironment{thebibliography}{%

  \setlength{\parskip}{0pt}
  \setlength{\baselineskip}{10pt}
    \setlength{\itemsep}{0pt}
}
 
\usepackage[letterpaper,top=2cm,bottom=2cm,left=3cm,right=3cm,marginparwidth=1.75cm]{geometry}

\usepackage{xcolor}
\usepackage{amsmath}
\usepackage{tikz}
\usetikzlibrary{arrows.meta, positioning, calc}

\newtheorem{theorem}{Theorem}[section]
 
\newtheorem{lemma}{Lemma}

\newtheorem{definition}{Definition}

\newtheorem{problem}{Problem}
\newtheorem{conjecture}{Conjecture}
\newtheorem{claim}{Claim}

\usepackage{amsmath}
\usepackage{graphicx}
\usepackage[colorlinks=true, allcolors=blue]{hyperref}
\usepackage{tikz}
\usetikzlibrary{arrows.meta, shapes.geometric}
\usetikzlibrary{positioning, arrows, shapes, calc}
\usepackage{graphicx}  
\usepackage{subcaption}
\usepackage{float}  
\usepackage{amssymb}
\usepackage{amsmath}
\usepackage{enumitem}  
\usepackage{mathtools}
 
\usepackage{pifont}  

\usepackage{bm}  
\usepackage{extarrows}
\usepackage{mathrsfs} 

\usepackage{lipsum}

\title{A Chv\'atal--Erd\H{o}s type condition for supereulerian digraphs  with $\alpha_{2}=4$} 
\author{Zirui Liu$^{1}$,  Jin Yan$^{1}$, Jia Zhou$^{2}$\footnote{Corresponding author. E-mail address: jiazhou\_99@163.com.}\\[2mm]
\small $^{1}$School of Mathematics, Shandong University, Jinan 250100, China\\
\small $^{2}$School of Mathematics and Statistics, Ningxia University, Yinchuan 750021, China}
\date{}

\begin{document}
	\maketitle

	\begin{abstract}
        A digraph is \textbf{supereulerian} if it contains a spanning closed trail. Let $\alpha_2(D)$ denote the maximum cardinality of a vertex set inducing no 2-cycle. In this paper, we characterize supereulerianity in a strong digraph $D$ with  $\alpha_2(D)=4$ by proving that a strong digraph $D$ with $\alpha_2(D)=4$ and  $\lambda(D)\ge 2$ is  supereulerian if and only if $D$ does not belong to an exceptional family $\mathcal H$ of $2$-arc-strong digraphs with $\alpha_2(D)=4$. Furthermore, every strong digraph satisfying
		$\alpha_2(D)=4$ and $\lambda(D)\geq3$ is  supereulerian.
	\end{abstract}
\vspace{1ex}
{\noindent\small{\bf Keywords: } digraphs; connectivity; supereulerian; independent number}
\vspace{1ex}

{\noindent\small{\bf AMS subject classifications.} 05C20, 05C40, 05C45}
	
	\section{Introduction}
	{Supereulerianity, a natural relaxation of Hamiltonicity, represents a fundamental and widely studied property in graph theory. A \textbf{Hamilton cycle} refers to a spanning closed trail with all vertices distinct. Let  $ \boldsymbol{\kappa(G)}$ and $  \boldsymbol{\alpha(G)}$ denote the vertex-connectivity and the independence number of a graph $G$, respectively. The classical theorem of Chv\'atal and Erd\H{o}s states that every graph $G$ with $\kappa(G)\geq \alpha(G)$ is Hamiltonian \cite{ChvatalErdos1972}. This result established a fundamental connection between connectivity and independence and initiated a broad line of work on Chv\'atal--Erd\H{o}s type conditions; see, for example, \cite{jackson,JacksonOrdaz1990,KuhnOsthus2012,aishi2026}.

    The directed analogue of the Chvátal--Erd\H{o}s theorem is substantially more delicate; even the following basic question remains open. Here, { $\alpha_0(D):=\max\{|S|: S\subseteq V(D)$, $(x,y)\notin A(D)\text{ for all distinct }x,y\in S\}$} is defined as the \textbf{independence number} of $D$, which constitutes the most natural generalization of the independence number $\alpha(G)$ for undirected graphs.

    \begin{problem}[K\"uhn and Osthus \cite{KuhnOsthus2012}]
       Is there a function $f_0(k)$ such that every digraph $D$ with $\kappa(D)\geq f_0(k)$ and $\alpha_0(D)\leq k$ contains a Hamilton cycle?
    \end{problem}

In contrast, the situation is different for the alternative $\boldsymbol{\alpha_2}$\textbf{-independence number} of a digraph $D$, defined as $\alpha_2(D)=\max\{|S|: S\subseteq V(D),\,(x,y)\notin A(D) \text{ and } (y,x)\notin A(D)\text{ for all distinct }x,y\in S\}$. In 1987, Jackson \cite{jackson} proved that $f_2(a)\leq 2^a(a+2)!$, where $f_2(a)$ is the least integer $K$ such that every $K$-connected digraph $D$ with $\alpha_2(D)\leq a$ is Hamiltonian. Very recently, Ai and Shi \cite{aishi2026} improved this factorial bound to $f_2(a)\leq 2a^3+2$.

 A natural question is to consider the Chv\'atal--Erd\H{o}s type condition for supereulerianity. In 2011, Bang-Jensen and Thomass\'e formulated the following conjecture (unpublished; see \cite{BangJensenMaddaloni2015}).

	\begin{conjecture}[Bang-Jensen and Thomass\'e; see \cite{BangJensenMaddaloni2015}]
		Let $D$ be a digraph. If $\lambda(D)\geq\alpha_0(D)$, then $D$ is supereulerian.\label{conjecture1}
	\end{conjecture}

Notably, it remains open whether there exists some constant $K$ such that $\lambda(D)>K$ and $\alpha_0(D)=2$ implies that $D$ is supereulerian. While
 Bang-Jensen and Maddaloni proved several supporting results, including the existence of an \textbf{Eulerian factor}, which is a spanning subdigraph each of whose components is supereulerian, under this condition  \cite{BangJensenMaddaloni2015}. Related sufficient conditions in terms of matching number and bipartite structure were obtained in \cite{AlgefariLai2016,ZhangLiuWangLai2018,WeiLai2025}. 

 Towards Conjecture~\ref{conjecture1} with $\alpha_2(D)$ in place of $\alpha_0(D)$, {Zhang, Yang, Lai and Liu \cite{ZhangYangLaiLiu2024}} established a necessary and sufficient condition for a strong digraph with $\alpha_2(D)=2$ to be \textbf{strongly eulerian-connected}, where a digraph is strongly eulerian-connected if, for each pair $x,y\in V(D)$, it admits a spanning $(x,y)$-trail. Most recently, {Liu , Zhao, Yang and Lai \cite{LiuZhaoYangLai2027}}   obtained a characterization that, in particular, yields the following result for $\alpha_2(D)=3$.
  \begin{theorem}[\cite{LiuZhaoYangLai2027}]
		{ Let $D$ be a strong digraph with $\alpha_2(D)=3$, and let $\mathcal{H}_1$ denote the exceptional family arising from the characterization in \cite{LiuZhaoYangLai2027}. Then $D$ is supereulerian if and only if $D\notin\mathcal{H}_1$.}
	\end{theorem}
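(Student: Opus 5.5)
This theorem is a cited result from \cite{LiuZhaoYangLai2027}, and the exceptional family $\mathcal{H}_1$ is not described here, so what follows is the strategy I would use to reconstruct it, not a self-contained proof.

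Necessity is direct. I would check that each member of $\mathcal{H}_1$ is not supereulerian. The natural certificate is a cut argument: find a set $S$ with $|(S,\overline S)|=1$ whose arcs cannot all be covered, or a set of vertices that every spanning closed trail would have to enter more often than the number of available arcs permits.

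For sufficiency, I would argue by contradiction using a maximal Eulerian subdigraph. Let $D\notin\mathcal{H}_1$ be strong with $\alpha_2(D)=3$, and choose a closed trail (Eulerian subdigraph) $T$ with $|V(T)|$ maximum. Such a $T$ exists with $|V(T)|\ge 2$, because $D$ is strong and contains a cycle. Suppose some vertex $v\notin V(T)$. The proof then has three steps.

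\begin{enumerate}[label=(\roman*)]
\item \textbf{Locate $v$ against $T$.} Since $D$ is strong, there are paths from $T$ to $v$ and from $v$ back to $T$. If there were a vertex $u\in V(T)$ with $u\to v\to u$, splicing the 2-cycle $uvu$ into $T$ at $u$ would give a larger Eulerian subdigraph. Hence $v$ forms no 2-cycle with any vertex of $T$.

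\item \textbf{Bound the outside structure using $\alpha_2=3$.} By the same splicing argument, the set $R=V(D)\setminus V(T)$ contains no closed trail sharing a vertex with $T$ via short ears. In particular, two outside vertices joined by a 2-cycle and attached to $T$ at the same vertex would extend $T$. Iterating this, $R$ together with suitable vertices of $T$ yields a set of pairwise non-2-cycle-adjacent vertices. By $\alpha_2(D)=3$, this forces $|R|$ and the number of 2-cycle-free components of $D[R]$ to be at most about $2$.

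\item \textbf{Case analysis on $|R|\in\{1,2\}$.} Use ear-augmentation: an ear $P$ from $x\in T$ through $R$ back to $y\in T$, together with a $(y,x)$-path inside $T$ whose arcs can be reversed or rerouted, yields a larger closed trail unless $T$ is very rigid. Failure of every such augmentation pins down the arc structure completely. I expect this to produce exactly the configurations in $\mathcal{H}_1$, for example small digraphs built from a 2-cycle-free tournament-like core joined by a cut arc.
\end{enumerate}

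The main obstacle is step (iii). When $T$ uses arcs that an ear needs, rerouting is delicate, and it is not obvious that the rigid configurations are exactly the listed family. My first attempt there would be to choose $T$ with $|V(T)|$ maximum and, subject to that, with $|A(T)|$ minimum. This makes $T$ close to a cycle, so that ears can be closed along $T$ itself and the case analysis stays finite.
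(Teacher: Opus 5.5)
\textbf{The gap is in step (ii), and it undermines step (iii).} Step (i) is correct. What it actually proves is that no 2-cycle joins $R=V(D)\setminus V(T)$ to $V(T)$, so $V(T)$ and $R$ are unions of components of $G_D$. Since $\alpha(G_D)=\alpha_2(D)=3$, $R$ consists of at most two such components. Nothing bounds the orders of these components, however, and no independence argument can bound $|R|$, because $R$ need not be 2-cycle-free.

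Here is a concrete example. Take complete symmetric digraphs $D_1,D_3$ of orders $n\ge m$ and a 2-cycle on $\{a,b\}$. Add at least one arc from each of $D_1,D_3$ into $a$, at least one arc from $b$ into each of $D_1,D_3$, and no other arcs. Then $D$ is strong and $\alpha_2(D)=3$. It is not supereulerian: every maximal segment of a closed trail inside $\{a,b\}$ enters at $a$ and leaves from $b$, so it uses the single arc $ab$, whereas visiting both $D_1$ and $D_3$ requires two such segments. By the theorem, $D\in\mathcal H_1$. Yet a maximum closed trail covers $V(D_1)\cup\{a,b\}$ and misses all $m$ vertices of $D_3$. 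So the exceptional digraphs you hope to recover in (iii) have $|R|$ unbounded, and a case analysis on $|R|\in\{1,2\}$ cannot produce them. As written, (iii) is an expectation rather than an argument.

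\textbf{The missing idea is to treat the components of $J(D)$ as blocks and reroute inside them,} instead of augmenting a fixed $T$ by ears. Choosing $|A(T)|$ minimal does not give you that rerouting flexibility. This paper only cites the $\alpha_2=3$ result, but the proof of Theorem~\ref{main} in Section~4, together with Lemma~\ref{mainlemma}, is the template, and for $\alpha_2=3$ it is short.
\begin{enumerate}
\item If $G_D$ has one or two components, Lemma~\ref{lemma1} (spanning trails with prescribed ends in a connected symmetric digraph) gives a spanning closed trail at once.
\item If $G_D$ has three components, each is complete, and the contraction is a strong digraph on three vertices.
\begin{enumerate}
\item If the contraction contains a 3-cycle, it lifts via Lemma~\ref{lemma7} and Lemma~\ref{lemma1} with $s=1$.
\item Otherwise the contraction is the symmetric path $v_1v_2v_3$. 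The middle block is traversed twice, and Lemmas~\ref{lemma4} and~\ref{lemma5} show that the only obstruction is a two-vertex middle block with exactly the entry/exit pattern of the example above (up to renaming $a,b$).
\end{enumerate}
\end{enumerate}
That obstruction is where $\mathcal H_1$ comes from, and identifying it is the step your outline is missing.
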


	In this paper, we establish a characterization for a digraph $D$ with $\alpha_2(D)=4$ to be supereulerian. The exceptional family $\mathcal H$ is defined in Section 2.

	\begin{theorem}\label{main}
		Let $D$ be a strong digraph with $\alpha_2(D)=4$. Then the following statements hold:
		\begin{itemize}
		\item[(i)]  If $\lambda(D)\geq2$ , then $D$ is supereulerian if and only if $D \notin \mathcal{H}$.
		\item[(ii)] If $\lambda(D)\geq3$ , then $D$ is supereulerian.
		\end{itemize}
		\label{theorem1.2}
	\end{theorem}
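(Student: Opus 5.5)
We plan to argue by contradiction, working with a maximal Eulerian subdigraph. Let $D$ be a strong digraph with $\alpha_2(D)=4$ and $\lambda(D)\ge 2$, and suppose $D$ is not supereulerian. Among all closed trails of $D$, choose one, $H$, that covers as many vertices as possible; among those, choose one with as many arcs as possible. Since $D$ contains $2$-cycles whenever $\alpha_2(D)<|V(D)|$, $H$ is nonempty.

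\textbf{Step 1: absorption rules.} We first record the local rules that maximality of $H$ imposes. If $x\notin V(H)$, then for every arc $(u,v)$ of $H$ the pair $(u,x),(x,v)$ cannot both be arcs; otherwise we could replace $(u,v)$ by the path $u x v$. More generally, no $(V(H),V(H))$-path with all internal vertices outside $H$ can be spliced into $H$ without creating a closed trail that covers more vertices. A second, less trivial consequence: no $2$-cycle of $D-V(H)$ can be attached to $H$ by a pair of arcs in opposite directions. These rules force the vertices outside $H$ to be ``almost independent'' from $H$ along consecutive arcs.

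\textbf{Step 2: build a large $2$-independent set.} Let $R=D-V(H)$ and take a strong component $C$ of $R$ that is terminal (or initial) in the condensation. Since $\lambda(D)\ge 2$, at least two arcs leave $C$ and at least two enter it. Because $\alpha_2(D)=4$, $C$ itself has $\alpha_2(C)\le 4$. Using the rules from Step 1, we pick one vertex from $C$ (or from several components of $R$), together with the successors on $H$ of the attachment vertices. The aim is to show that these vertices are pairwise nonadjacent in both directions. This is the Chv\'atal--Erd\H{o}s style step: with $\lambda(D)\ge 3$ we get at least three attachment vertices. Together with one vertex of $R$ this gives a $2$-independent set of size $5$, contradicting $\alpha_2(D)=4$, which proves (ii) once the case analysis in Step 1 is complete. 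With only $\lambda(D)=2$ the same construction yields just $4$ vertices, so there is no contradiction. Instead, equality must hold throughout, and this rigidity pins down the structure.

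\textbf{Step 3: the case $\lambda(D)=2$.} We then analyze the structure forced when equality holds. There are exactly two arcs in and two arcs out between $C$ and $H$. The segments of $H$ between attachment points form cliques in the $2$-cycle sense, since otherwise $\alpha_2$ would exceed $4$. Also, $R$ has a bounded number of components, at most $4$. We would show that these configurations are exactly the members of $\mathcal H$. Conversely, we verify that each digraph in $\mathcal H$ is not supereulerian, typically via a cut argument: there is a set $S$ such that every closed trail must use some $2$-arc cut twice in a way incompatible with the arc count. We also check that each such digraph satisfies $\lambda=2$ and $\alpha_2=4$, which gives (i).

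The main obstacle is Step 2. The issue is guaranteeing that the successors of the attachment vertices on $H$ are mutually nonadjacent. Arcs among them would allow a rerouting of $H$, but the rerouting may not obviously preserve the closed-trail property, since a trail can revisit vertices. We would try a standard crossing argument on the cyclic order of attachment points, using the secondary maximality of $|A(H)|$ to break ties. If this fails, we would fall back on the characterization for $\alpha_2=3$ from \cite{LiuZhaoYangLai2027}. In that approach, we delete a suitable vertex of $R$ to reduce $\alpha_2$ to $3$, apply that theorem, and then reinsert the vertex using $\lambda\ge 2$.
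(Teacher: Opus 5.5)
\textbf{There is a genuine gap, and it is in Step 2, which carries the whole argument.}

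First, the count does not produce a contradiction. With $\lambda(D)\ge 3$ you get at most three attachment successors plus one vertex of $R$. That is a $2$-independent set of size $4$, not $5$. Likewise $\lambda(D)=2$ gives $3$, not $4$, and nothing in the proposal supplies an extra vertex. A set of size $4$ is consistent with $\alpha_2(D)=4$, so even a fully justified Step 2 would not prove (ii). A Chv\'atal--Erd\H{o}s count needs at least four attachments, i.e.\ essentially $\lambda\ge\alpha_2$.

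Second, arc-connectivity does not give distinct attachment vertices. All arcs leaving $C$ may end at the same vertex of $H$. Also, since $H$ is a closed trail that may revisit vertices, ``the successor on $H$'' of an attachment vertex is not well defined.

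Third, the pairwise nonadjacency of these successors, which you flag as the main obstacle, is never established. The segment-reversal rerouting behind the undirected argument is not available in digraphs.

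Step 3 is only a statement of intent. Nothing shows that segments of $H$ are $2$-cycle cliques, or that the rigid configurations are exactly $\mathcal H$.

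The fallback also fails as stated. Deleting a vertex need not lower $\alpha_2$, and may destroy strong connectivity or $\lambda\ge 2$. The $\alpha_2=3$ characterization also has its own exceptional family, which you would have to control after reinsertion.

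\textbf{The missing idea is to use the $2$-cycles globally.} For the underlying graph $G_J$ of the symmetric core, $\alpha(G_J)=\alpha_2(D)=4$, so $G_J$ has at most four components. Each component induces a connected symmetric digraph, which by Lemma~\ref{lemma1} has spanning trails between any prescribed ends. Your maximality framework would actually lead here: any closed trail that is arc-disjoint from $H$ and meets $H$ can be merged into $H$. Hence a vertex-maximal $H$ is a union of $G_J$-components.

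The paper proceeds as follows:
\begin{itemize}
\item It handles one or two components directly.
\item With three or four components, it reduces to four complete blocks, using the two-clique lemma when the component with $\alpha=2$ has a bridge. Otherwise it reduces to two complete blocks plus a block with $\lambda\ge 2$.
\item It contracts the blocks and classifies the strong quotient digraphs on four vertices without a $4$-cycle (and on three vertices).
\item It lifts a spanning closed trail of the quotient using arc-disjoint covering trails inside small complete blocks (Lemmas~\ref{lemma4}--\ref{lemma7}). The only obstruction surviving $\lambda\ge 2$ is $\mathcal H$.
\end{itemize}
Part (ii) then follows from (i) because every member of $\mathcal H$ has only two arcs from $A$ to $B$, hence $\lambda=2$. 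It does not come from an independent-set count.
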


   \noindent \textbf{Organization.}  	
	The  paper is organized as follows. Section 2  introduces the notation and preliminary lemmas used throughout , and it also defines the exceptional family $\mathcal H$ and records its basic properties. Section 3 establishes the key reduction lemma  used in the proof of Theorem~\ref{theorem1.2}. Section 4 proves Theorem~\ref{theorem1.2} by analyzing the connected components of the symmetric core. We conclude in Section 5 with several open problems.}	

	\section{Preliminaries}

Undefined notation follows \cite{book}. All digraphs are
finite and loopless, and no multiple arcs are allowed. For a digraph $D$, we
write $\boldsymbol{V(D)}$ and $\boldsymbol{A(D)}$ for its vertex set and arc set, respectively, and
write $\boldsymbol{|D|}:=|V(D)|$ for the  \textbf{order} of $D$. If
$U\subseteq V(D)$, then $\boldsymbol{D[U]}$ denotes the subdigraph of $D$ induced by
$U$, and $\boldsymbol{D-U}:=D[V(D)\setminus U]$. For a vertex $v\in V(D)$, denote
\[
 \boldsymbol{N_D^+(v)}:=\{x\in V(D):vx\in A(D)\},
 \qquad
 \boldsymbol{N_D^-(v)}:=\{x\in V(D):xv\in A(D)\}
\]
to be the \textbf{out-neighbourhood} and \textbf{in-neighbourhood} of $v$, respectively, and
let $ \boldsymbol{d_D^+(v)}:=|N_D^+(v)|$, $
 \boldsymbol{d_D^-(v)}:=|N_D^-(v)|.$
For vertex subsets $X,Y\subseteq V(D)$, define
		\begin{equation*}
			\boldsymbol{(X,Y)_D}=\{(x,y)\in A(D) : x\in X,y\in Y\}
		\end{equation*}
If $X=\{v\}$ or $Y=\{w\}$, then $ (v,Y)_D=\{(v,y)\in A(D) : y\in Y\}$ and $(X,w)_D=\{(x,w)\in A(D) : x\in X\}$.

Let $\boldsymbol{\mathbb N}$  denote the set of positive integers. For integers $a\leq b$, let $\boldsymbol{ [a,b]}:=\{i\in\mathbb Z:a\leq i\leq b\}$, $\boldsymbol{[b]}:=[1,b].$ Given sets $A$ and $B$, the symbol $\boldsymbol{A\mathbin{\dot\cup} B}$ stands for their disjoint union, i.e., $A\cap B=\emptyset$.

 %If $F$ is a subdigraph of $D$, we also write $\boldsymbol{D-F}:=D-V(F)$.
 
        A digraph is \textbf{symmetric} if $xy\in A(D)$ implies $yx\in A(D)$. The \textbf{symmetric core} $J(D)$ is the spanning symmetric subdigraph with 
 $ A(J(D))=\{xy\in A(D):yx\in A(D)\}$. 
Let $\boldsymbol{G_D}$ denote the underlying graph of $J(D)$. By definition, two vertices are adjacent in $G_D$ precisely when they form a 2-cycle in $D$. Consequently,
\begin{equation*}
  \alpha(G_D)=\alpha_2(D) \text{  and }\kappa(G_D)\leq\kappa(D).
\end{equation*}
%DIF >  AUTHOR CHECK: G_D is not defined in the manuscript. It may be intended to be G_J, but the displayed mathematical statement has been left unchanged because it lies in the author's green-marked revision.
A connected symmetric digraph is strong. Moreover, a spanning tree of its underlying graph, with both orientations of every tree edge, is a connected Eulerian spanning subdigraph. Thus every connected symmetric digraph is supereulerian.

\subsection{Lemmas}

   { Liu, Yang, Lai and Zhang \cite{LiuYangLaiZhang2021}} established the following result, which will be used repeatedly.

	\begin{lemma}
		{\cite{LiuYangLaiZhang2021}  Let $s\geq 1$ be an integer, let $D$ be a strong symmetric digraph with $\lambda(D)\geq s$, and let $x_1,x_2,\ldots,x_s$ and $y_1,y_2,\ldots,y_s$ be two vertex sequences of $D$. Then $D$ has a connected spanning subdigraph $T'_D$ that is the arc-disjoint union of trails $T_1,T_2,\ldots,T_s$, where $T_i$ is an $(x_i,y_i)$-trail for every $i\in[s]$. }\label{lemma1}
	\end{lemma}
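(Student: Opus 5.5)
Lemma~\ref{lemma1} is cited from \cite{LiuYangLaiZhang2021}, so what follows is a plan for how I would prove it independently. The plan is induction on $s$, using the fact that a strong symmetric digraph is exactly a connected graph $G$ with every edge doubled. In $D$, $\lambda(D)\ge s$ means $G$ is $s$-edge-connected, since each edge of $G$ gives one arc in each direction across any cut.

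The approach is to reduce the problem to an undirected one. In a symmetric digraph, an arc-disjoint union of trails with prescribed ends corresponds to a multiset of arcs in which the out-degree minus the in-degree equals $\#\{i:x_i=v\}-\#\{i:y_i=v\}$ at every vertex $v$. The construction will therefore go as follows.

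First, choose a spanning tree $T$ of $G$ and use both orientations of every tree edge. This gives a connected, balanced, spanning, arc-disjoint closed structure.

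Second, for each $i\in[s]$, choose an $(x_i,y_i)$-path $P_i$ in $G$. I want the paths $P_1,\dots,P_s$ to be pairwise edge-disjoint, and I want them to use each edge in at most one direction beyond what $T$ already uses. Replace the doubled tree arcs along $P_i$ so that, on each edge of $P_i$, the arc in the direction of travel is used once more and the opposite arc once less. Equivalently, on an edge $uv$ of $T$ traversed $u\to v$ by $P_i$, drop the arc $vu$ and keep $uv$. On non-tree edges, simply add the arc $uv$.

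The $s$-edge-connectivity of $G$ is what supplies the disjointness. I would apply Menger/Nash-Williams-type reasoning, or more simply do the following. If the paths share an edge $e$ but traverse it in the same direction, the doubled copy of $e$ in $D$ already provides two arcs, $uv$ and $vu$. So instead of edge-disjointness, I only need that the net flow along each edge is at most $1$ in each direction. That is achieved by treating the demands as a flow: the $s$ demands sum to a flow problem with $s$ units of net supply across any cut. Since every cut of $G$ carries at least $s$ edges, each of capacity $1$ in each direction, a feasible integral flow $f$ exists by the max-flow/min-cut (Gale/Hoffman) theorem. Adding $f$ to $T$ with the cancellation rule yields an arc set $A'$ whose degree imbalance is exactly the prescribed one, and $A'$ spans a connected subdigraph because every tree edge keeps at least one arc.

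Finally, decompose $A'$ into $s$ trails $T_i$ from $x_i$ to $y_i$ with total arc set $A'$. The standard Euler argument handles this: add $s$ new arcs $y_i\to x_i$ to obtain a connected balanced digraph, take an Eulerian circuit, and cut it at the added arcs. One subtlety is that cutting yields trails matched to some permutation of the endpoint pairs. To get exactly the $(x_i,y_i)$ pairing, I would instead add a single auxiliary vertex $z$ per pair, using arcs $y_i\to z_i\to x_i$, and cut at the $z_i$. The circuit then splits into trails from $x_{\sigma(j)}$ to $y_{\tau(j)}$, which is still a mismatch. The fix I would try is to route the flow as $s$ separate unit flows $f_i$, one per commodity. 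This is the main obstacle, since multicommodity integrality fails in general. I would circumvent it by the tree trick: each $T_i$ consists of the path $P_i$ in $T$ from $x_i$ to $y_i$, corrected by $s$-edge-connectivity only where overlaps exceed capacity. The closed remainder is then absorbed into $T_1$ through splicing at common vertices, which is possible because the whole arc set is connected.
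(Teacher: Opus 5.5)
There is a genuine gap, exactly at the step you flag as the ``main obstacle.'' The paper does not prove this lemma (it is quoted from the literature), so your argument has to stand on its own.

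\textbf{What works.} Your first two steps are sound. The cut condition $b(S)\le|\{i:x_i\in S,\ y_i\notin S\}|\le s\le d_G(S)$ yields an integral flow of net value at most $1$ on each edge. The doubled spanning tree with your cancellation rule then gives a connected spanning arc set $A'$ whose out-degree minus in-degree at $v$ is $|\{i:x_i=v\}|-|\{i:y_i=v\}|$.

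\textbf{The gap.} This data only records the multisets $\{x_i\}$ and $\{y_i\}$. Any Euler-type splitting of $A'$ produces trails from $x_j$ to $y_{\sigma(j)}$ for a permutation $\sigma$ you do not control. Your repair (\emph{tree paths corrected by $s$-edge-connectivity only where overlaps exceed capacity}) is not a construction. If all $x_i$ lie on one side of a tree edge $uv$ and all $y_i$ on the other, all $s$ tree paths need the single arc $uv$. Rerouting $s-1$ of them arc-disjointly with prescribed ends is precisely the multicommodity problem the single flow was meant to avoid. Relatedly, two routes crossing $uv$ in the same direction are not helped by the presence of $vu$, contrary to one of your remarks. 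Splicing closed remainders is fine, but only once correctly paired arc-disjoint trails exist.

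\textbf{The missing idea.} Here the multicommodity problem is always solvable, by Shiloach's weak-linkage theorem, which follows directly from Edmonds' branching theorem. Add a new vertex $r$ with arcs $rx_1,\dots,rx_s$ (parallel arcs allowed). Every nonempty $X\subseteq V(D)$ then has in-degree at least $s$: by $\lambda(D)\ge s$ if $X\neq V(D)$, and by the $s$ new arcs if $X=V(D)$. So there are $s$ arc-disjoint out-branchings rooted at $r$. Since $r$ has out-degree $s$, each uses exactly one arc $rx_i$. The one using $rx_i$ restricts to an out-branching of $D$ rooted at $x_i$, and hence contains an $(x_i,y_i)$-path $P_i$.

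Symmetry then gives connectivity for free. For every edge $uv$ of $G$ such that neither $uv$ nor $vu$ lies on any $P_i$, add the $2$-cycle $uvu$. The union has underlying graph $G$, so it is connected and spanning. While some $2$-cycle is unused, connectivity forces some unused $2$-cycle to meet a current trail, and it can be spliced into that trail without changing endpoints. This replaces your flow and Euler-decomposition steps entirely.
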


    Lemma \ref{lemma1}  immediately implies that each strong symmetric digraph
	is supereulerian and strongly eulerian-connected. The following immediate consequence of Lemma~\ref{lemma1} will be used repeatedly.

	\begin{lemma}
		For $i\in[4]$, let $D_i$ be a complete subdigraph of $D$, and let $x^i_1,\ldots,x^i_s$ and $y^i_1,\ldots,y^i_s$ be $2s$ vertices in $D_i$ (not necessary distinct). If $|V(D_i)|\geq s+1$ or $|V(D_i)|=1$, then there exist pairwise arc-disjoint $(x^i_j,y^i_j)$-trails $T^i_{(x^i_j,y^i_j)}$, $j\in[s]$, (not necessary spanning) such that
			\[
			\bigcup_{j=1}^s V\bigl(T^i_{(x^i_j,y^i_j)}\bigr)=V(D_i).
			\]
		\label{lemma4}
	\end{lemma}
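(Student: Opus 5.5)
The statement is about a single complete digraph, so I will fix $i\in[4]$ and prove it for $D_i$ alone. There are two cases: $|V(D_i)|=1$ and $|V(D_i)|\geq s+1$.

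\emph{Case $|V(D_i)|=1$.} Here every $x^i_j$ and $y^i_j$ equals the unique vertex $v$ of $D_i$. I take each $T^i_{(x^i_j,y^i_j)}$ to be the trivial trail consisting of $v$ alone. These trails use no arcs, so they are pairwise arc-disjoint, and their vertex sets together cover $V(D_i)=\{v\}$.

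\emph{Case $|V(D_i)|\geq s+1$.} The plan is to apply Lemma~\ref{lemma1} to the digraph $D_i$ itself. Since $D_i$ is complete, every pair of vertices forms a $2$-cycle, so $D_i$ is a strong symmetric digraph. Its arc-strong connectivity is $|V(D_i)|-1\geq s$. To check this, take any nonempty proper subset $X\subsetneq V(D_i)$. Completeness gives
\[
|(X,V(D_i)\setminus X)_{D_i}| = |X|\,(|V(D_i)|-|X|) \geq |V(D_i)|-1 \geq s,
\]
because the product $a(n-a)$ with $1\leq a\leq n-1$ is minimised at $a=1$ or $a=n-1$.

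Lemma~\ref{lemma1} therefore applies to $D_i$ with the sequences $x^i_1,\ldots,x^i_s$ and $y^i_1,\ldots,y^i_s$. It yields arc-disjoint trails $T_1,\ldots,T_s$, with $T_j$ an $(x^i_j,y^i_j)$-trail, whose union $T'_{D_i}$ is a connected spanning subdigraph of $D_i$. Spanning means
\[
\bigcup_{j=1}^s V(T_j)=V(D_i),
\]
so setting $T^i_{(x^i_j,y^i_j)}:=T_j$ gives exactly the required trails. All their arcs lie in $D_i\subseteq D$, so they are trails in $D$.

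The only point that needs care is that Lemma~\ref{lemma1} allows repeated vertices in its sequences and allows $x_j=y_j$. In that situation $T_j$ is a closed trail, possibly a single vertex. This matches the statement's phrase ``not necessary distinct''. The arc-connectivity computation above is the only other thing to check.
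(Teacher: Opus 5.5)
Your proof is correct and follows the paper's argument. You use trivial one-vertex trails when $|V(D_i)|=1$, and otherwise apply Lemma~\ref{lemma1} to $D_i$ via $\lambda(D_i)=|V(D_i)|-1\ge s$; your cut count $|X|(|V(D_i)|-|X|)\ge |V(D_i)|-1$ simply justifies the step the paper states without proof.
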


	\begin{proof}
	If $|V(D_i)|\ge s+1$, then $\lambda(D_i)=|V(D_i)|-1\ge s$, because $D_i$ is complete. Therefore the assertion follows from Lemma~\ref{lemma1}. If $|V(D_i)|=1$, say $V(D_i)=\{v\}$, then  $x^i_1=\cdots =x^i_s =y^i_1=\cdots =y^i_s=v$, and the $s$ trivial trails $\{v,\ldots, v\}$ have the required properties.
    \end{proof}

Lemmas \ref{lemma5} and \ref{lemma6} address the existence of arc‑disjoint trails in complete graphs on 2 and 3 vertices, which are not covered by Lemma \ref{lemma4}.
\begin{lemma}\label{lemma5}

{ Let $K$ be a complete digraph on $\{a,b\}$, and let $(x_j,y_j)$, $j\in[s]$, be pairs of vertices of $K$ \emph{(}$x_j=y_j$ is possible\emph{)}.} There exist pairwise arc-disjoint $(x_j,y_j)$-trails $T_j$, $j\in[s]$, with $V(K)= \bigcup_{j\in [s]} V(T_j)$ if and only if at most one pair is $(a,b)$ and at most one pair is $(b,a)$.
\end{lemma}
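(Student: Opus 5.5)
The complete digraph $K$ on $\{a,b\}$ has exactly two arcs, $ab$ and $ba$. The proof is therefore a direct counting argument on how trails can use these two arcs. I will prove necessity and sufficiency separately.

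For \emph{necessity}, I first classify the possible trails in $K$. A trail cannot repeat an arc, and there are only two arcs. Hence every $(x,y)$-trail in $K$ is one of the following:
\begin{itemize}
\item the trivial trail ($x=y$);
\item the single arc $ab$ or $ba$;
\item one of the closed trails $aba$ or $bab$, each of which uses both arcs.
\end{itemize}
In particular, every $(a,b)$-trail uses the arc $ab$, and every $(b,a)$-trail uses the arc $ba$. Since the trails must be pairwise arc-disjoint, at most one pair can equal $(a,b)$. Likewise at most one pair can equal $(b,a)$.

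For \emph{sufficiency}, assume at most one pair is $(a,b)$ and at most one is $(b,a)$. I split into cases according to which mixed pairs occur.
\begin{itemize}
\item If some pair is $(a,b)$ or $(b,a)$, I assign that pair its single arc. Assign the other mixed pair (if present) the other arc. Assign every pair with $x_j=y_j$ the trivial trail. These trails are arc-disjoint, and the arc used already covers both $a$ and $b$.
\item If no pair is mixed, then every pair has the form $(a,a)$ or $(b,b)$.
  \begin{itemize}
  \item If both $a$ and $b$ occur as endpoints among the pairs, trivial trails already cover $V(K)$.
  \item Otherwise, say all pairs equal $(a,a)$. I give one of them the closed trail $aba$, which uses both arcs and covers $b$, and give the rest trivial trails.
  \end{itemize}
  The case where all pairs equal $(b,b)$ is symmetric.
\end{itemize}

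I expect no real obstacle here. The only points needing care are that trivial trails are permitted (the paper does this for $|V(D_i)|=1$ in Lemma~\ref{lemma4}), and that the vertex covering must be checked in the case where all pairs coincide.
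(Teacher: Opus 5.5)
Your proof is correct and follows essentially the same route as the paper. Necessity holds because every $(a,b)$-trail must use the unique arc $ab$ and every $(b,a)$-trail the unique arc $ba$; sufficiency assigns single arcs to the mixed pairs and trivial trails to the rest, using the closed trail $aba$ (or $bab$) only when all pairs sit at one vertex.
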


\begin{proof}

Every $(a,b)$-trail uses the unique arc $(a,b)$, and every $(b,a)$-trail uses the unique arc $(b,a)$, which proves necessity. Conversely, for each pair $(x_j,y_j)$, if $ x_j\neq y_j$, we let $ T_j:=(x_j,y_j)$, otherwise, let $ T_j:=x_j (=y_j)$. These trails are pairwise arc-disjoint as at most one pair is $(a,b)$ and at most one pair is $(b,a)$. If there is a pair $(x_j,y_j)$ with  $ x_j\neq y_j$, then $V(T_j)=V(K)$. Thus assume that all pairs $(x_j,y_j)$ have $ x_j= y_j$, further if there exist two pair $ (x_j,y_j)$ and $(x_i,y_i)$ such that $ x_j=y_j=a$  and $x_i=y_i=b$, then $V(K)= V(T_i)\cup V(T_j)$. Thus assume all $ x_j= y_j=a$ ($ x_j= y_j=b$ is similar), then $s-1$ trivial trails and a trail $ aba$ are desired $s$ trails.  This completes the proof.
\end{proof}

\begin{lemma}\label{lemma6}

Let $K$ be a complete digraph on three vertices, and let $(x_j,y_j)$, $j\in[3]$, be three pairs of vertices of $K$ (it is possible that $x_j=y_j$). There exist three pairwise arc-disjoint trails $T_1,T_2,T_3$ such that each $T_j$ is an  $(x_j,y_j)$-trail , $j\in[3]$, and $V(K)= V(T_1)\cup V(T_2)\cup V(T_3)$ if and only if there is no nonempty proper set $A\subset V(K)$ such that
\[
x_j\in A\quad\text{and}\quad y_j\in V(K)\setminus A\qquad\text{for each }j\in[3].
\]
\end{lemma}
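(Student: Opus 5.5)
Necessity is a cut-counting argument. Suppose a nonempty proper $A\subset V(K)$ has $x_j\in A$ and $y_j\in V(K)\setminus A$ for every $j\in[3]$. Then each $(x_j,y_j)$-trail $T_j$ starts in $A$ and ends outside $A$, so it must use at least one arc of $(A,V(K)\setminus A)_K$. Because the three trails are pairwise arc-disjoint, this cut would have to contain at least three arcs. But in a complete digraph on three vertices we have $|(A,V(K)\setminus A)_K|=|A|\cdot(3-|A|)=2$ for $|A|\in\{1,2\}$, a contradiction.

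For sufficiency, I will assume no such $A$ exists and build the trails by hand. Write $V(K)=\{a,b,c\}$ and call pair $j$ \emph{trivial} if $x_j=y_j$ and \emph{nontrivial} otherwise. The natural first attempt uses single arcs: let $T_j$ be the arc $x_jy_j$ for nontrivial $j$, and the one-vertex trail $x_j$ for trivial $j$. This fails only if two nontrivial pairs coincide, or if the union of the vertex sets misses a vertex. I then split into cases by the number $t$ of nontrivial pairs.

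\textbf{Case $t=0$.} All trails are closed. If the three $x_j$ are distinct, trivial trails already cover $V(K)$. Otherwise I let one trail be a closed walk such as $abca$ or $aba$ through the missing vertices, and keep the others trivial. A single closed walk through all three vertices exists, so coverage is easy.

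\textbf{Case $t\ge1$.} Here the hypothesis bites. The forbidden configurations are: all three pairs nontrivial with a common tail set $A$ (for example all three equal to $(a,b)$; or $(a,b),(a,c),(a,c)$ with $A=\{a\}$; or $(a,c),(b,c),(a,c)$ with $A=\{a,b\}$). So whenever a nontrivial pair $(u,v)$ is repeated, the hypothesis provides another pair that does not cross the same cut. I route repeated pairs along two-arc paths $uwv$ through the third vertex $w$, which simultaneously covers $w$. Concretely, for $t=2$ with the pairs $(u,v),(u,v)$, I use $uv$ and $uwv$. For $t=3$, I distribute routes among $uv$, $uwv$ and the arcs incident with $w$, making sure no arc is used twice. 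If some vertex remains uncovered, I extend a trail through it, for instance replacing $uv$ by $uwv$, or replacing a trivial trail $u$ by $uwu$ when the arcs $uw,wu$ are free. The main point is that there are six arcs in total, and each two-arc detour uses arcs distinct from the direct ones.

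\textbf{Main obstacle.} The hardest subcase is $t=3$ with repeated pairs, e.g.\ $(a,b),(a,b),(x_3,y_3)$. The hypothesis rules out every third pair that crosses both the cut $\{a\}$ and the cut $\{a,c\}$ in the forward direction, leaving options such as $(b,a)$, $(c,a)$, $(b,c)$ and $(c,b)$. I would check these explicitly. For instance, with $(a,b),(a,b),(c,b)$ the trails $ab$, $acb$ would clash on arc $cb$; instead I use $a\,b$, $a\,c\,a\,b$? That repeats vertex $a$ but is a trail provided arc $ab$ is not reused, so it also clashes. Hence I would take $ab$ for the first pair, $acb$ for the second, and $cab$ for the third. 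This is blocked too, since $ab$ is used twice, which shows that $A=\{a,c\}$ is in fact violating here: all tails $a,a,c$ lie in $A$ and all heads $b$ lie outside it. So this configuration is excluded by the hypothesis, consistent with the statement. I expect the careful enumeration up to symmetry (permutations of $a,b,c$, reversal of all arcs, and reordering of the pairs) to be the bulk of the work.
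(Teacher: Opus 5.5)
Your necessity argument and your cases $t\le 2$ are fine. Note, though, that for $t\le 2$ the hypothesis is vacuous: a trivial pair $x_j=y_j$ can never satisfy $x_j\in A$, $y_j\notin A$. So the hypothesis does not ``bite'' there, and those cases must, and do, work unconditionally.

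\textbf{The gap is the case $t=3$.} This is the only case in which the hypothesis is used, and you never prove it. The sentence ``I distribute routes among $uv$, $uwv$ and the arcs incident with $w$'' is not an argument, and your attempted enumeration goes wrong before being deferred. With first two pairs $(a,b),(a,b)$, the hypothesis excludes every third pair crossing \emph{either} of the sets $\{a\}$, $\{a,c\}$ (the two sets containing $a$ but not $b$), not only pairs crossing both. The excluded third pairs are therefore $(a,b)$, $(a,c)$, $(c,b)$. So $(c,b)$ does not belong in your list of admissible options, and the configuration $(a,b),(a,b),(c,b)$ on which you got stuck is exactly the one excluded by $A=\{a,c\}$. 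Since you end by leaving ``the careful enumeration'' as the bulk of the work, sufficiency is not established.

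\textbf{The missing step is short.}
\begin{itemize}
\item If $t=3$ and the pairs are pairwise distinct, take the arcs $x_jy_j$. They are three distinct arcs, and since any two vertices span only two arcs of $K$, they cover $V(K)$.
\item All three pairs equal to $(u,v)$ is excluded by $A=\{u\}$.
\item If exactly two pairs equal $(u,v)$, let $w$ be the third vertex and use $uv$ and $uwv$. These occupy exactly the arcs $uv,uw,wv$, i.e.\ the arcs leaving $\{u\}$ together with the arcs leaving $\{u,w\}$. Applying the hypothesis to $A=\{u\}$ and $A=\{u,w\}$ shows that the third pair $(p,q)$ has $pq\notin\{uv,uw,wv\}$. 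Hence $uv$, $uwv$, $pq$ are pairwise arc-disjoint and cover $V(K)$.
\end{itemize}

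\textbf{Comparison with the paper.} Completed this way, your direct enumeration is a different route from the paper's. The paper uses that some $x_i=y_k$ with $i\neq k$ (otherwise $A=\{x_1,x_2,x_3\}$ is forbidden), merges these two pairs into one $(x_k,y_i)$-pair, and applies Lemma~\ref{lemma4} with $s=2$. Your version is self-contained. It also sidesteps a point the paper leaves implicit: the merged trail must actually pass through $y_k=x_i$ so that it can be split there, and Lemma~\ref{lemma4} does not guarantee this.
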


\begin{proof}
We first prove the necessity. If such a set $A$ exists, then each of the three trails must use an arc from $A$ to $V(K)\setminus A$. As $K$ contains precisely two such arcs, a contradiction follows.

To establish the sufficiency, conversely, suppose there is no $x_j=y_j$ for each $j\in [3]$. Without loss of generality, let $x_1 = y_k$ for some $k\in [3]\setminus\{1\}$. It is easy to check that $\lambda(K)\geq 2$ since $K$ is complete on three vertices. By Lemma~\ref{lemma4}, there exist two pairwise arc‑disjoint trails, an $(x_k,y_1)$‑trail $T_{(x_k,y_1)}$ and an $(x_{5-k},y_{5-k})$‑trail $T_{(x_{5-k},y_{5-k})}$, such that $V(K)\subseteq V(T_{(x_k,y_1)})\cup V(T_{(x_{5-k},y_{5-k})})$. Three arc-disjoint trails have been found since $T_{(x_k,y_1)}=T_{(x_k,y_k)}\bigcup T_{(x_1,y_1)}$. If for some $j\in [3]$, $x_j=y_j$. Using Lemma~\ref{lemma4} for other two pairs can gives the required three trails.  
\end{proof}

Lemma \ref{lemma7} gives a sufficient condition for a digraph $D$ to be supereulerian: if $D$ possesses a good partition and the contracted graph of this partition admits a spanning Eulerian closed trail.

\begin{lemma}\label{lemma7}
Suppose $D$ is a digraph with a partition $V(D)=V(D_1)\dot\cup\cdots\dot\cup V(D_r)$, and let 
$W=i_1i_2\cdots i_mi_1$ 
be a spanning closed trail on $[r]$. For each $t\in[m]$, choose an arc 
$e_t=(y^{i_t}_t,x^{i_{t+1}}_{t+1})\in (V(D_{i_t}),V(D_{i_{t+1}}))_D,$ 
where the indices are taken modulo $m$, and suppose that $e_1,\ldots,e_m$ are pairwise distinct. For each \(i\in[r]\), suppose that for all indices t with \(i_t = i\), there exist pairwise arc‑disjoint \((x^i_t,y^i_t)\)‑trails \(T^i_t\) in \(D_i\) such that the union of \(\{T^i_t\}_t\) covers all vertices of \(D_i\). Then $D$ is supereulerian.
\end{lemma}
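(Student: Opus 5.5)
The plan is to build an explicit spanning closed trail in $D$ by concatenation. We alternate between the connecting arcs $e_t$ and the internal trails $T^{i_t}_t$, following the order of the closed walk $W$ on the index set $[r]$. Concretely, I will consider the sequence
\[
C:=T^{i_1}_1\, e_1\, T^{i_2}_2\, e_2\, \cdots\, T^{i_m}_m\, e_m ,
\]
where $T^{i_t}_t$ is the $(x^{i_t}_t,y^{i_t}_t)$-trail in $D_{i_t}$ supplied by the hypothesis. The arc $e_t$ goes from $y^{i_t}_t$ to $x^{i_{t+1}}_{t+1}$, with indices taken modulo $m$.

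First I will check that $C$ is a closed walk. Each $T^{i_t}_t$ ends at $y^{i_t}_t$, which is the tail of $e_t$. The head of $e_t$ is $x^{i_{t+1}}_{t+1}$, which is where $T^{i_{t+1}}_{t+1}$ starts. Since indices are modulo $m$, the head of $e_m$ is $x^{i_1}_1$, the start of $T^{i_1}_1$, so the walk closes up.

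Second, I will check that $C$ repeats no arc. There are three kinds of arcs in $C$.
\begin{itemize}
\item The arcs $e_t$ are pairwise distinct by hypothesis. Each $e_t$ joins two different parts, because $W$ is a trail on $[r]$ and so $i_t\neq i_{t+1}$ (there are no loops). Hence no $e_t$ lies inside any $D_i$, and no $e_t$ coincides with an arc of a trail $T^{i}_{t'}$.
\item Two trails $T^{i}_t,T^{i'}_{t'}$ with $i\neq i'$ live in the vertex-disjoint subdigraphs $D_i$ and $D_{i'}$, so they share no arcs.
\item Two trails with the same index $i$ (that is, $i_t=i_{t'}=i$ and $t\neq t'$) are arc-disjoint by assumption.
\end{itemize}
So $C$ is a closed trail.

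Third, I will check that $C$ is spanning. Since $W$ is spanning on $[r]$, every $i\in[r]$ occurs as some $i_t$. For each such $i$, the trails $\{T^i_t: i_t=i\}$ cover $V(D_i)$ by assumption. Since the parts $V(D_i)$ partition $V(D)$, every vertex of $D$ lies on $C$, and therefore $D$ is supereulerian.

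The only point needing care is the degenerate case $r=1$ (or $m$ small), where $W$ has no arcs. For $r\ge2$, strong connectivity of $W$ forces $m\ge2$ and the argument above applies. For $r=1$, I would interpret the hypothesis as requiring a single closed trail covering $D_1$, which is then itself the spanning closed trail. The main obstacle is therefore just the bookkeeping that the trail endpoints match the arc endpoints; this is routine.
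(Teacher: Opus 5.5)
Your proposal is correct and follows the paper's own argument. The paper also takes the concatenation $T^{i_1}_1e_1T^{i_2}_2e_2\cdots T^{i_m}_me_m$ and concludes it is a spanning closed trail because the parts partition $V(D)$; you additionally write out the endpoint matching, arc-disjointness and coverage checks that the paper leaves implicit.
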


\begin{proof}
According to the assumption, $W=i_1i_2\cdots i_mi_1$ 
is a spanning closed trail on $[r]$, and for all indices t with \(i_t = i\), there exist pairwise arc‑disjoint \((x^i_t,y^i_t)\)‑trails \(T^i_t\) in \(D_i\) such that the union of \(\{T^i_t\}_t\) covers all vertices of \(D_i\). Hence 
\[
\text{ the concatenation $T^{i_1}_1e_1T^{i_2}_2e_2\cdots T^{i_m}_me_m$ is a closed trail.}
\]
 Since $V(D)=V(D_1)\dot\cup\cdots\dot\cup V(D_r)$, this closed trail is spanning.
\end{proof}

 We use the following elementary observation from Yang, Liu and Meng \cite{YangLiuMeng2023}.

	\begin{lemma} \label{lemma:two-cliques}
		 \cite{YangLiuMeng2023} Let $G$ be a connected graph with $\kappa(G)<2$ and $\alpha(G)=2$. Then $V(G)$ can be partitioned into two nonempty sets, each of which induces a complete graph.
	\end{lemma}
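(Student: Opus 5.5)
The lemma to prove is: if $G$ is connected with $\kappa(G)<2$ and $\alpha(G)=2$, then $V(G)$ splits into two nonempty cliques. The plan is a direct structural argument on a cut vertex.

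\textbf{Setting up.} Since $\alpha(G)=2$, $G$ is not complete, so $|V(G)|\ge 3$. A connected graph with $\kappa(G)<2$ on at least three vertices is not $2$-connected, so $G$ has a cut vertex $v$. Let $C_1,\dots,C_k$, $k\ge 2$, be the components of $G-v$.

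\textbf{Step 1: exactly two components, each a clique.} Take vertices $u_1\in C_1$ and $u_2\in C_2$. They are nonadjacent because they lie in different components.
\begin{itemize}
\item If $k\ge 3$, then a vertex of $C_3$ together with $u_1,u_2$ forms an independent set of size $3$. This contradicts $\alpha(G)=2$, so $k=2$.
\item If $C_1$ contains two nonadjacent vertices $a,b$, then $\{a,b,u_2\}$ is independent, again a contradiction. Hence $C_1$ is a clique, and by symmetry so is $C_2$.
\end{itemize}

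\textbf{Step 2: place $v$ on one side.} Set $N_i=N(v)\cap C_i$. Each $N_i$ is nonempty because $G$ is connected.
\begin{itemize}
\item If $N_1=C_1$, then $C_1\cup\{v\}$ is a clique, and the partition $\{C_1\cup\{v\},\,C_2\}$ works. The case $N_2=C_2$ is symmetric.
\item Otherwise there are $a\in C_1\setminus N_1$ and $b\in C_2\setminus N_2$. Then $a,b,v$ are pairwise nonadjacent: $a$ and $b$ lie in different components, and neither is adjacent to $v$. This gives $\alpha(G)\ge 3$, a contradiction.
\end{itemize}
So one of the first two cases always holds, and both parts of the resulting partition are nonempty.

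The only mild obstacle is this last step, deciding where $v$ goes. The remaining steps are direct applications of $\alpha(G)=2$.
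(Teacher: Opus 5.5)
Your proof is correct and complete. The paper does not prove this lemma itself. It imports it from Yang, Liu and Meng, so there is no in-paper argument to compare against.

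Your cut-vertex argument works as a short, self-contained replacement for the citation:
\begin{itemize}
\item Three components of $G-v$, or a non-edge inside one component, would give an independent triple. Hence $G-v$ is exactly two cliques $C_1,C_2$.
\item If $v$ missed some $a\in C_1$ and some $b\in C_2$, then $\{a,b,v\}$ would be independent. So $v$ is complete to one side and can be added to it.
\end{itemize}

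One minor point: $|V(G)|\ge 3$ follows from $G$ being both connected and non-complete, since two nonadjacent vertices need a third vertex on a path between them. Non-completeness alone does not give it.
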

	
\subsection{The construction of exceptional family $\mathcal H$}
    
 We now define the exceptional family $\mathcal H$   appearing in Theorem~\ref{main}.

	\begin{definition}\label{def:H}
{ The family $\mathcal H$ consists of the digraphs $D$ satisfying $\lambda(D)\ge 2$ for which there is a partition 
 $ V(D)=V(D_1)\dot\cup V(D_2)\dot\cup V(D_3)\dot\cup V(D_4)$ 
with the following properties.
\begin{enumerate}[label=\textnormal{(H\arabic*)}]
  \item Each $D_i=D[V(D_i)]$ is a nonempty complete symmetric digraph.
  \item $|V(D_1)|=3$, and $V(D_1)=A\dot\cup B$ with $A,B\ne\varnothing$.
  \item For $i\in\{2,3,4\}$, $(A,V(D_i))_D=(V(D_i),B)_D=\varnothing,$   while 
    $|(B,V(D_i))_D|\ge2$ and $ |(V(D_i),A)_D|\ge2.$
  \item For distinct $i,j\in\{2,3,4\}$, there are no arcs between $D_i$ and $D_j$ in either direction.
\end{enumerate}}
\end{definition}

	\begin{figure}[htbp]   
		\centering
		\includegraphics[width=0.5\textwidth]{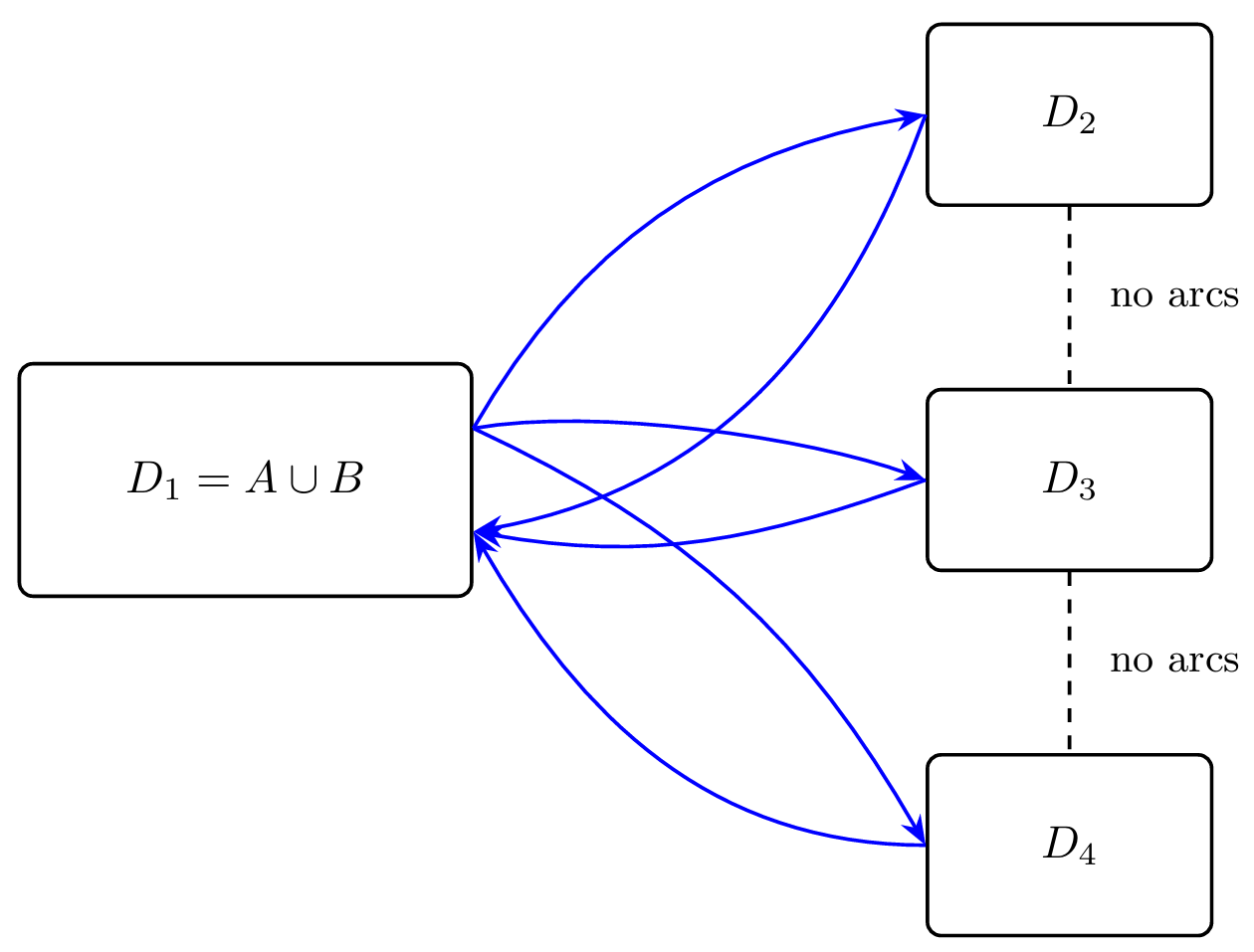}
        \caption{The exceptional family $\mathcal H$.}
    \end{figure}

	\begin{theorem} \label{theorem2.1}
		 Let $\mathcal{H}$ be the digraph family defined in Definition~\ref{def:H}. If $D\in\mathcal{H}$, then the following statements hold. 
		\begin{itemize}
			\item[(i)] $\alpha_2(D)=4$.
			\item[(ii)] $\lambda(D)=2$. 
			\item[(iii)] $D$ is not supereulerian.
		\end{itemize}
	\end{theorem}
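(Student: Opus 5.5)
We will verify the three items directly from the structure (H1)--(H4), using the partition $V(D)=V(D_1)\dot\cup V(D_2)\dot\cup V(D_3)\dot\cup V(D_4)$ throughout.

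\emph{(i) $\alpha_2(D)=4$.} By (H3) and (H4), the only arcs between $D_i$ and $D_j$ for distinct $i,j$ go from $B$ into $D_i$ and from $D_i$ into $A$, for $i\in\{2,3,4\}$. Hence there is no $2$-cycle between distinct parts. It follows that $G_D$ is the disjoint union of the four complete graphs $G_{D_i}$, and each of these is complete by (H1). Therefore $\alpha_2(D)=\alpha(G_D)=4$: choosing one vertex from each part gives an independent set, and any five vertices contain two in the same clique.

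\emph{(ii) $\lambda(D)=2$.} We already have $\lambda(D)\ge 2$ by definition of $\mathcal H$, so it suffices to exhibit a cut of size $2$. Take $X=V(D_1)$. Every arc leaving $X$ goes into some $D_i$ with $i\in\{2,3,4\}$ and starts in $B$. Every arc entering $X$ ends in $A$.

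A cleaner cut is $X=B$. Arcs out of $B$ go to $A$ inside $D_1$ or to $D_2\cup D_3\cup D_4$, which is too many. Instead consider the in-arcs of $B$. By (H3), $(V(D_i),B)_D=\varnothing$ for every $i\in\{2,3,4\}$, so every arc entering $B$ comes from $A$, inside $D_1$. Since $|A|+|B|=3$, we have $|A||B|=2$. So $d^-(B)=|(A,B)_D|=2$, and hence $\lambda(D)\le 2$.

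\emph{(iii) $D$ is not supereulerian.} This is the main point. Suppose $C$ is a spanning closed trail. For each $i\in\{2,3,4\}$, the set $V(D_i)$ is nonempty and must be visited by $C$. By (H3) and (H4), every arc entering $V(D_i)$ comes from $B$, and every arc leaving it goes to $A$. So each visit of $C$ to $D_i$ consists of an arc from $B$, followed by a stretch inside $D_i$, followed by an arc to $A$.

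After reaching $A$, the trail must eventually return to $B$ before it can enter another $D_j$. Every path from $A$ to $B$ must use an arc of $(A,B)_D$, because the only in-arcs of $B$ come from $A$. Consequently, the number of excursions from $B$ out to some $D_i$ and back to $A$ is at most the number of times $C$ uses an arc of $(A,B)_D$. There are exactly $|A||B|=2$ such arcs.

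More precisely, contract each $D_i$ for $i\in\{2,3,4\}$ and consider the cyclic sequence of passages. Each of the three excursions ends in $A$ and must be followed by a traversal of an $(A,B)$-arc before the next excursion begins. Hence $C$ needs at least $3$ distinct $(A,B)$-arcs, contradicting the fact that only $2$ exist.

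The step needing care is the case where $C$ goes from $A$ to $B$ via some $D_i$. This cannot happen, since arcs out of $A$ do not enter any $D_i$ by (H3). Thus the counting argument is rigorous, and $D$ is not supereulerian.
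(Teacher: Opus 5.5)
Your proof is correct and follows the paper's argument: one vertex from each $D_i$ for (i), the two-arc cut $(A,B)_D$ for (ii), and, for (iii), the count of at least three distinct $(A,B)$-arcs forced by the three excursions into $D_2,D_3,D_4$. Your version makes explicit what the paper leaves implicit (that every arc entering $B$ comes from $A$, and why no five vertices are $\alpha_2$-independent), but you should delete the abandoned attempts with $X=V(D_1)$ and with the out-arcs of $B$ in part (ii), since only $d^-(B)=|A||B|=2$ is needed.
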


	  \begin{proof}
	To prove (i), we can choose one vertex from each $D_i$ for $i \in \{1,2,3,4\}$. By the definition~\ref{def:H}, those four vertices form an $\alpha_2$-independent set of $D$. Therefore $\alpha_2(D)\geq 4$. It is easy to check that there is no $\alpha_2$-independent set of five vertices by the construction of $D$, which implies that $\alpha_2(D)=4$.

	 For (ii), by Definition 1, we have $\mid V(D_1)\mid =3$. Without loss of generality, assume that $\mid A\mid=1$, $\mid B\mid=2$. So we can see that $\mid (A,B)_D \mid=2 $, which leads to $\lambda(D)\leq2$.

Finally, we prove (iii). Note that any spanning closed trail must enter and leave each of $D_2$, $D_3$, $D_4$ at least one time. Since $D_i\Rightarrow A$ and $B\Rightarrow D_i$ for $i\in\{2,3,4\}$, it follows that any spanning closed trail requires at least three distinct arcs from $A$ to $B$. However, by the construction of $\mathcal{H}$, only two such arcs exist, which is impossible.
\end{proof}

	\section{Proof of the key reduction lemma}

	In this section we prove the reduction lemma (Lemma \ref{mainlemma}), and derive a corollary that will be used in the proof of Theorem~\ref{main}.

	\begin{lemma}
		Let $D$ be a strong digraph with $\lambda(D)\geq2$.
        \begin{itemize}
      
        \item[(1)] Suppose that $V(D)$ can be  partitioned by four vertex-disjoint complete subdigraphs $D_1$, $D_2$, $D_3$ and $D_4$. Then $D$ is supereulerian if and only if $D\notin\mathcal{H}$.

        \item[(2)] Suppose that $V(D)$ can be partitioned by three vertex-disjoint symmetric subdigraphs $D_1,D_2,D_3$, where $D_1$ and $D_2$ are complete and $\lambda(D_3)\geq2$.Then $D$ is supereulerian.
        
        \end{itemize} 
		\label{mainlemma}
	\end{lemma}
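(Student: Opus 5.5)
The plan is to reduce both parts to Lemma~\ref{lemma7}. We contract each part $D_i$ to a single vertex $i$ and obtain a multi-digraph $Q$ on $[r]$, where $r=4$ in (1) and $r=3$ in (2). The arcs of $Q$ are the arcs of $D$ between distinct parts. Since $\lambda(D)\ge 2$, we have $\lambda(Q)\ge 2$. We then look for a spanning closed trail $W$ in $Q$ whose lifted arcs $e_t$ are distinct. Its entry and exit vertices in each part are forced, and we must show that, inside each $D_i$, the prescribed $(x^i_t,y^i_t)$ pairs can be realised by arc-disjoint trails covering $V(D_i)$. The in-part step is handled as follows. Use Lemma~\ref{lemma4} when $|V(D_i)|=1$ or $|V(D_i)|\ge s+1$, where $s$ is the number of visits of $W$ to $i$. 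Use Lemma~\ref{lemma5} and Lemma~\ref{lemma6} for parts of order $2$ or $3$. In part (2), $D_3$ is handled by Lemma~\ref{lemma1} directly whenever $s\le 2$. The necessity half of (1) is exactly Theorem~\ref{theorem2.1}(iii).

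For part (2), the aim is a closed trail $W$ in $Q$ (on three vertices, $\lambda(Q)\ge2$) visiting each vertex at most twice. We also want the endpoints chosen so that the size-$2$ and size-$3$ obstructions of Lemmas~\ref{lemma5} and \ref{lemma6} do not occur. Take $W$ to be either a directed triangle $123$ or a union of two short cycles through a common vertex, say $1\,2\,1\,3\,1$. Because each cut of $Q$ carries at least two arcs in each direction, there is freedom in choosing which lifted arcs to use. With $s\le2$, Lemma~\ref{lemma4} covers all complete parts of order $\ne 2$. For an order-$2$ part, we choose lifts avoiding a repeated $(a,b)$ pair, using that $\lambda(D)\ge2$ gives at least two distinct arcs into and out of the part. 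Since $\lambda(D_3)\ge2$, Lemma~\ref{lemma1} applies to $D_3$ for $s\le 2$. If a vertex of $W$ is visited three times, we reorganize so that the thrice-visited part is $D_3$, or we exploit that Lemma~\ref{lemma6} only fails for one specific cut pattern.

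For part (1), we argue sufficiency by cases on the structure of $Q$ on four vertices with $\lambda(Q)\ge2$. If $Q$ has a spanning closed trail in which every vertex is visited at most twice, the argument of part (2) goes through. If some vertex must be visited three times, then $Q$ is essentially a "star": one centre, say $1$, and leaves $2,3,4$ with no arcs between the leaves. When $|V(D_1)|\ge 4$ or $|V(D_1)|=1$, Lemma~\ref{lemma4} with $s=3$ finishes. $|V(D_1)|=2$ is impossible, since by Lemma~\ref{lemma5} three visits would require three arcs across the two vertices; in fact, we then reroute using the at-least-two arcs per leaf in each direction to make entry and exit coincide where possible. When $|V(D_1)|=3$, Lemma~\ref{lemma6} fails only if there is a cut $A\dot\cup B$ of $V(D_1)$ such that every entry point into $D_1$ lies in $A$ and every exit lies in $B$, \emph{for every} admissible choice of lifted arcs. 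Unwinding this over all choices gives $(A,V(D_i))_D=(V(D_i),B)_D=\emptyset$ for each leaf. The remaining conditions of Definition~\ref{def:H} follow from $\lambda(D)\ge2$, which yields $|(B,V(D_i))_D|\ge2$ and $|(V(D_i),A)_D|\ge2$. Hence $D\in\mathcal H$.

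The main obstacle is this last case analysis. We must show that whenever the centre has order $3$ and all three-visit trails fail, the arc pattern is forced to be exactly the $\mathcal H$ structure. We must also rule out mixed situations, such as a leaf being visited twice, or arcs between leaves allowing a different trail $W$. I would first try to choose $W$ minimizing the maximum number of visits, and among those to maximize the freedom in the lifted arcs. Then I would show that any leftover failure propagates the $A\to B$ cut to all three leaves.
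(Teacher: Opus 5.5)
\textbf{Verdict: genuine gap.} Your architecture is the paper's: contract the parts, take a spanning closed trail $W$ of the contraction, realise it via Lemma~\ref{lemma7} (with Lemmas~\ref{lemma1}, \ref{lemma4}, \ref{lemma5}, \ref{lemma6} inside the parts), and read off $\mathcal H$ from the Lemma~\ref{lemma6} obstruction at a star centre of order $3$. That last step is essentially the paper's Case~4 and is sound: failure for every choice of lifts just says that no vertex of the centre both receives an arc from and sends an arc to the leaves. The problem is everything you defer.

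\textbf{The reduction to a star is unproved.} You assert without proof that three visits are needed only for a star. The paper gets this, together with a finite list of two-visit configurations, from Claim~\ref{claim1}: $D'$ either has a $4$-cycle or is one of $H_1,\dots,H_4$.

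\textbf{The two-visit mechanism fails.} More seriously, your treatment of an order-$2$ part $\{a,b\}$ visited twice --- keep $W$ and re-choose lifts, since ``$\lambda(D)\ge2$ gives at least two distinct arcs into and out of the part'' --- does not work. Two entering arcs can both end at $a$. What $\lambda(D)\ge2$ really gives is an arc leaving $a$ other than $ab$ and an arc entering $b$ other than $ab$, and these help only if $W$ traverses the corresponding arcs of the contraction.

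Consider $H_2$ with $V(D_3)=\{a,b\}$ and $W=v_3v_4v_3v_1v_2v_3$. Suppose every arc from $V(D_2)\cup V(D_4)$ into $D_3$ ends at $a$, and every arc from $D_3$ to $V(D_4)\cup V(D_1)$ starts at $b$. Then every choice of lifts yields the pairs $(a,b),(a,b)$, which Lemma~\ref{lemma5} forbids. Yet $\lambda(D)\ge2$ is not violated: $a$ may send arcs to $D_2$ and $b$ may receive arcs from $D_1$ along the optional arcs $(v_3,v_2)$, $(v_1,v_3)$, which $W$ never uses. Such a $D$ is supereulerian, but not via this $W$. The paper switches (its relabelling in Case~2) to $v_3v_4v_3v_2v_1v_3$. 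The needed arc $(v_2,v_1)$ must itself be forced by $\lambda(D)\ge2$: otherwise $ab$ is the only arc leaving $\{a\}\cup V(D_2)$.

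So ``the argument of part~(2) goes through'' is false in general. In part~(2) the twice-visited part is the middle of a symmetric path, and $W$ uses all its neighbours in both directions; that is exactly what fails in $H_2$. Even there, your justification should be replaced by the degree argument $d^+(a)\ge2$, $d^-(b)\ge2$. You would also need an argument when two order-$2$ parts are both visited twice, as in $H_1$ or $H_3$. There a single lifted arc fixes an exit of one part and an entry of the other simultaneously, so the two parts cannot be treated independently.

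\textbf{The order-$2$ star centre is mishandled.} Your claim that a star centre of order $2$ is ``impossible'' is false; nothing in the hypotheses excludes it. It is merely never an obstruction, and that needs proof. Since $d^{\pm}(a),d^{\pm}(b)\ge2$, both $a$ and $b$ send arcs to and receive arcs from the leaves. Treat each leaf excursion as a virtual arc from its departure vertex to its return vertex in $\{a,b\}$. The choices can be balanced so that (number of $a\to b$) minus (number of $b\to a$) lies in $\{-1,0,1\}$: two leaves forced to be $a\to b$ force the third to admit $b\to a$. An Euler tour of these virtual arcs, plus at most one copy each of $ab$ and $ba$, then gives an admissible pairing. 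Your ``reroute'' sentence does not supply this.

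In short, what is missing is the classification of the $4$-cycle-free contractions and, for each configuration, an argument that may change the trail $W$ itself, not merely its lifts.
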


\begin{proof}
		First we prove (1). The necessary is obvious, because if $D\in\mathcal H$, $D$ is not supereulerian by Theorem~\ref{theorem2.1}. It remains to prove the sufficiency, that is, every $D\notin\mathcal H$ satisfying the hypotheses is supereulerian. Next, we construct a contracted digraph \(D'\) from $D$ by contracting each \(D_i\) to a single vertex \(v_i\) for \(i\in[4]\) without parallel arcs. Since $D$ is strong, so is $D'$.  Claim \ref{claim1} classifies the possible structures of \(D'\).

		\begin{claim}
			 If $D'$ contains no  $4$-cycle, then, after relabelling, one of the following holds:
			\begin{itemize}			
				\item[(i)] $D'\cong H_1$, where $V(H_1)=V(D')$ and
				\[
				A(H_1)=\{(v_1,v_2),(v_2,v_1),(v_2,v_3),(v_3,v_2),(v_3,v_4),(v_4,v_3)\}.
				\]

				\item[(ii)] $D'\cong H_2$, where $V(H_2)=V(D')$,
					\[
					A(H_2)\supseteq \{(v_3,v_4),(v_4,v_3),(v_3,v_1),(v_1,v_2),(v_2,v_3)\},
					\]
					\[
					A(H_2)\cap \{(v_2,v_4),(v_4,v_1),(v_1,v_4)\} =\varnothing,
					\]
					while each of $(v_1,v_3),(v_2,v_1),(v_3,v_2),(v_4,v_2)$ may or may not be present.

				\item[(iii)] $D'\cong H_3$, where $V(H_3)=V(D')$ and
				\[
				A(H_3)=\{(v_1,v_2),(v_2,v_3),(v_3,v_4),(v_4,v_2),(v_3,v_1)\}.
				\]

				\item[(iv)] $D'\cong H_4$, where $V(H_4)=V(D')$ and
				\[
				A(H_4)=\{(v_1,v_2),(v_2,v_1),(v_2,v_3),(v_3,v_2),(v_2,v_4),(v_4,v_2)\}.
				\]

			\end{itemize}
			\label{claim1}
		\end{claim}

		\begin{figure}[htbp]
			\centering
			\begin{subfigure}[b]{0.22\textwidth}
				\centering
				\includegraphics[width=\linewidth]{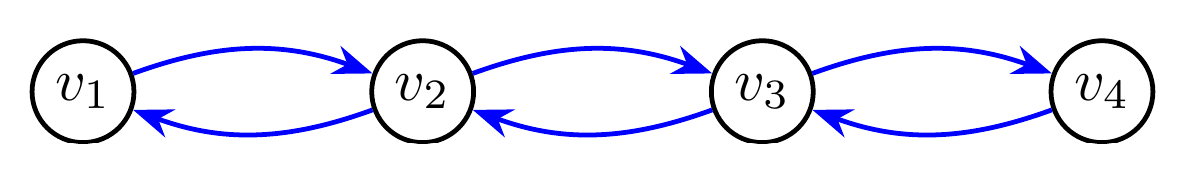}
		\caption{$H_1$ in class (i)}
			\end{subfigure}
			\hfill
			\begin{subfigure}[b]{0.22\textwidth}
				\centering
				\includegraphics[width=\linewidth]{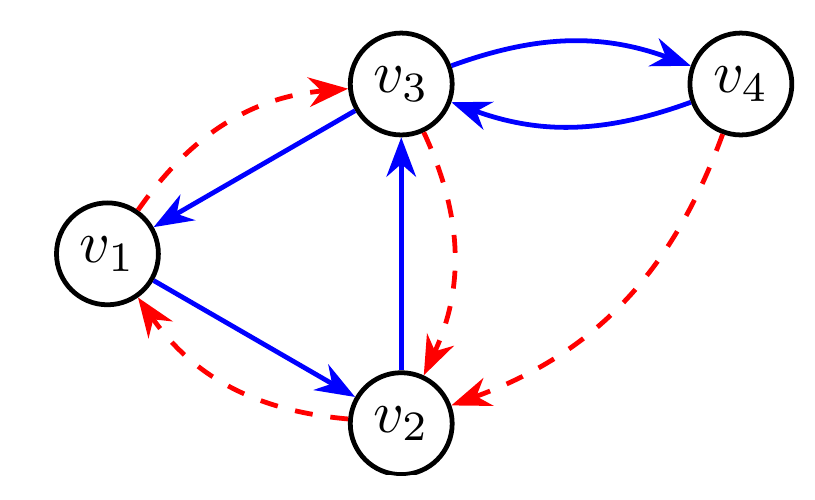}
				\caption{$H_2$ in class (ii)}
			\end{subfigure}
			\hfill
			\begin{subfigure}[b]{0.22\textwidth}
				\centering
				\includegraphics[width=\linewidth]{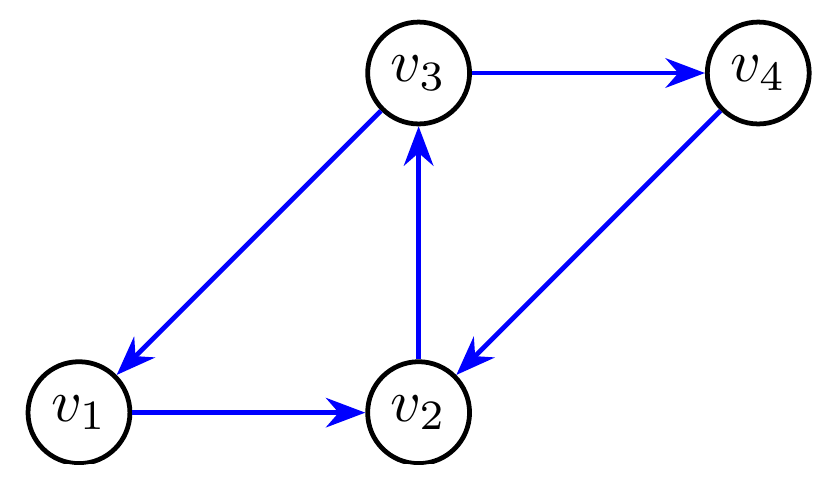}
				\caption{$H_3$ in class (iii)}
			\end{subfigure}
			\hfill
			\begin{subfigure}[b]{0.22\textwidth}
				\centering
				\includegraphics[width=\linewidth]{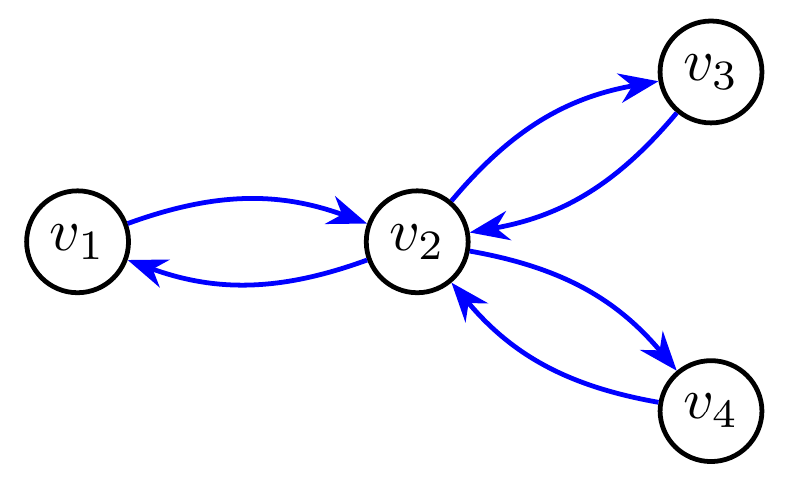}
				\caption{$H_4$ in class (iv)}
			\end{subfigure}
			\caption{The four configurations in Claim~\ref{claim1}. Blue arcs are required; red arcs are optional.}
		\end{figure}

		\begin{proof}
			Since $D'$ is strong, it contains a 2-path. For the sake of contradiction, assume that $D'$ contains no $4$-cycle and is not isomorphic to any of the four configurations $H_1,H_2,H_3,H_4$. We distinguish the following cases.

				Assume first that $D'$ contains a 3-path, say $v_1v_2v_3v_4$. Since $D'$ has no $4$-cycle, $(v_4,v_1)\notin A(D')$. As $D'$ is strong, $v_4$ has an out-neighbour in $\{v_2,v_3\}$ and $v_1$ has an in-neighbour in $\{v_2,v_3\}$. This yields four possible cases. Firstly, suppose that $(v_4,v_2),(v_3,v_1)\in A(D')$. If $(v_2,v_4)\in A(D')$, then $(v_4,v_3)\notin A(D')$, since otherwise a $4$-cycle is obtained, then $D'\cong H_2$, contrary to our assumption. Hence $(v_2,v_4)\notin A(D')$. The similar analysis excludes the arcs $(v_4,v_3),(v_1,v_3),(v_2,v_1)$ in $D'$. The remaining arcs give $D'\cong H_3$, again a contradiction. Thus $(v_4,v_2)$ and $(v_3,v_1)$ cannot both be present in $D'$. Secondly, assume that $(v_3,v_1),(v_4,v_3)\in A(D')$. Then $(v_2,v_4)\notin A(D')$, since otherwise $D'$ contains a $4$-cycle. Hence $D'\cong H_2$, a contradiction. By the symmetric argument, if $(v_4,v_2),(v_2,v_1)\in A(D')$,  $D'\cong H_2$, again a contradiction. The only remaining possibility is $(v_4,v_2),(v_3,v_1)\notin A(D')$ and $(v_4,v_3),(v_2,v_1)\in A(D')$. The strong connectivity of $D'$ forces a $(v_4,v_1)$-path, and consequently $(v_3,v_2)\in A(D')$, which follows that $D'\cong H_1$, a contradiction. Consequently, 
                \begin{equation}\label{(1)}
                    \text{$D'$ contains no 3-path.}
                \end{equation}

				 We next claim that $D'$ contains no  $3$-cycle. Otherwise, after relabelling, let $v_1v_2v_3v_1$ be such a cycle. Since $D'$ is strong, $v_4$ has both an in-arc and an out-arc, which together with the 3-cycle yields a 3-path, contradicting. Therefore,
                 \begin{equation}\label{(2)}
                     \text{ $D'$ contains no  $3$-cycle.}
                 \end{equation}

				 Finally, let $v_1v_2v_3$ be a 2-path. By \eqref{(1)} and \eqref{(2)}, neither $(v_3,v_4)$ nor $(v_3,v_1)$ is present. Since $D'$ is strong, it follows that $(v_3,v_2)\in A(D')$. If $(v_1,v_4)\in A(D')$, then \eqref{(1)} and \eqref{(2)} force $(v_4,v_1),(v_4,v_2),(v_4,v_3)\notin A(D')$, contradicting strong connectivity of $D'$. Hence $(v_1,v_4)\notin A(D')$, and $\kappa{(D')}\geq 1$ forces $(v_2,v_4)\in A(D')$. Again by \eqref{(1)} and \eqref{(2)}, $(v_4,v_1),(v_4,v_3)\notin A(D')$, so $(v_4,v_2)\in A(D')$,  similarly, $(v_2,v_1)\in A(D')$. Thus $D'\cong H_4$, the final contradiction. 
		\end{proof}

Conversely, suppose that $D$ is not supereulerian. 
By Claim~\ref{claim1}, either $D'$ contains a $4$-cycle or, after relabelling, $D'$ is one of $H_1,H_2$, $H_3$ and $H_4$. It is easy to check that $D'$ contains a spanning closed trail $W$ in either case, where $ W$ go through each vertex $v_i$ at most 3 times. Let $l_i$ be the number of times that $W$ passes through vertex \(v_i\). If each $D_i$ satisfies $|D_i|\geq l_i+1$ or $|D_i|=1$, we may apply Lemma \ref{lemma4} and then Lemma \ref{lemma6} to obtain the desired spanning closed trail. We thus assume that $1<|D_i|\leq l_i$ for some $i\in [4]$. Recall that $W$ is a spanning closed trail of $D'$, say $W:=v_{i_1}v_{i_2}\cdots v_{i_m}v_{i_1} $. For each $t\in [m]$, choose an arc 
$e_t=(y^{i_t}_t,x^{i_{t+1}}_{t+1})\in (V(D_{i_t}),V(D_{i_{t+1}}))_D,$ 
where the indices are taken modulo $m$, and suppose that $e_1,\ldots,e_m$ are pairwise distinct. To apply Lemma \ref{lemma7}, we assert that there exist pairwise arc‑disjoint \((x^i_t,y^i_t)\)‑trails \(T^i_t\) in \(D_i\) such that the union of \(\{T^i_t\}_t\) covers all vertices of \(D_i\). Since $1<|D_i|\leq l_i$, it follows that $D'$ is 4-cycle-free. By Claim \ref{claim1}, we consider the following four cases.

\medskip
\noindent\textbf{Case 1.} Suppose $D'\cong H_1$. Considering the trail $W=v_1v_2v_3v_4v_3v_2v_1$ in $H_1$, then $l_2=l_3=2$. Put $L_2=D_1$ and $L_3=D_4$. Fix $i\in\{2,3\}$ with $|V(D_i)|=2$, say $V(D_i)=\{a_i,b_i\}$. By Lemma~\ref{lemma5} and Lemma~\ref{lemma7}, without loss of generality, $x^i_1=x^i_2=a_i$ and $y^i_1=y^i_2=b_i$ in which $x^i_1$ and $x^i_2$ are in-neighbours of $L_i$ and $D_{5-i}$ and $y^i_1$ and $y^i_2$ are out-neighbours of $D_{5-i}$ and $L_i$. Therefore, $|(V(L_i)\cup V(D_{5-i}),b_i)_D|=0$ and $|(a_i,V(L_i)\cup V(D_{5-i}))_D|=0$, then $d^-(b_i)=d^+(a_i)=1$, which contradicts $\lambda(D)\geq 2$.

\medskip
\noindent\textbf{Case 2.} Suppose $D'\cong H_2$. Considering the trail $v_3v_4v_3v_1v_2v_3$ in $H_2$, then $l_3=2$. If $|V(D_3)|\ne2$, apply Lemmas~\ref{lemma7} and Lemma~\ref{lemma4}, there exists  a spnning closed trail, a contradiction. Thus let $V(D_3)=\{a,b\}$. By Lemma~\ref{lemma5} and Lemma~\ref{lemma7}, without loss of generality,  $x_1=x_2=a$ and $y_1=y_2=b$ in which $x_1$ and $x_2$ are in-neighbours of $D_2$ and $D_4$ and $y_1$ and $y_2$ are out-neighbours of $D_4$ and $D_1$. Therefore, $|(V(D_2)\cup V(D_4),b)_D|=0$ and $|(a,V(D_4)\cup V(D_1))_D|=0$. Since $\lambda(D)\geq 2$ and the structure of $H_2$, $|(a,V(D_2)_D|\geq 1,|(V(D_2),V(D_1))_D|\geq 1,|(V(D_1),b)_D|\geq 1$. Then we change the labels of $D_1$ and $D_2$, by Lemma~\ref{lemma5} and Lemma~\ref{lemma7}, getting a contradiction.

\medskip
\noindent\textbf{Case 3.} Suppose $D'\cong H_3$. Consider the trail $v_1v_2v_3v_4v_2v_3v_1$ contained in $H_3$, which implies $l_2=l_3=2$. Analogous to Case 2, we set $L_2=D_1$ and $L_3=D_4$. Choose $i\in\{2,3\}$ such that $|V(D_i)|=2$, and let $V(D_i)=\{a_i,b_i\}$. By Lemma~\ref{lemma5} and Lemma~\ref{lemma7}, we may assume without loss of generality that $x^i_1=x^i_2=a_i$ and $y^i_1=y^i_2=b_i$. This gives two distinct scenarios: either $x^i_1,x^i_2$ are in-neighbors from $L_i$ and $L_{5-i}$ while $y^i_1,y^i_2$ are out-neighbors to $D_{5-i}$, or $x^i_1,x^i_2$ are in-neighbors from $D_{5-i}$ while $y^i_1,y^i_2$ are out-neighbors to both $L_i$ and $L_{5-i}$. We first analyze the former scenario. It follows that $|(V(L_i)\cup V(L_{5-i}),b_i)_D|=0$ and $|(a_i,V(D_{5-i}))_D|=0$, so that $d^-(b_i)=d^+(a_i)=1$. This contradicts the hypothesis $\lambda(D)\geq 2$. The latter scenario yields an identical contradiction.

\medskip
\noindent\textbf{Case 4.} Suppose $D'\cong H_4$, and set $C=D_2$ and $\mathcal L=\{D_1,D_3,D_4\}$. For each $L\in\mathcal L$, define
\[
X_L=\{v\in V(C):(V(L),v)_D\ne\varnothing\},\qquad
Y_L=\{v\in V(C):(v,V(L))_D\ne\varnothing\}.
\]
The three appearances of $C$ in the closed trail correspond to selecting one vertex from each $X_L$ and one vertex from each $Y_L$, where the selected in-neighbors and out-neighbors are paired following the cyclic order of the three leaf blocks. We first assume $|V(C)|=2$, and let $V(C)=\{a,b\}$. Considering the trail $v_1v_2v_3v_2v_4v_2v_1$ in $H_4$, we obtain $l_2=3$. By Lemma~\ref{lemma5} and Lemma~\ref{lemma7}, we may assume without loss of generality that the only obstructive configuration satisfies $X_{D_1}=X_{D_3}=\{a\}$ and $Y_{D_3}=Y_{D_4}=\{b\}$. Consequently, no additional $(a,b)$-path exists except the single arc $(a,b)$, which contradicts the hypothesis $\lambda(D)\geq 2$. Hence, $|V(C)|\neq 2$, yielding $|V(C)|=3$. By Lemma~\ref{lemma6} and Lemma~\ref{lemma7}, we further derive
\[
\bigg(\bigcup_{L\in\mathcal L}X_L\bigg)\cap\bigg(\bigcup_{L\in\mathcal L}Y_L\bigg)=\emptyset \quad \text{and} \quad
\bigg(\bigcup_{L\in\mathcal L}X_L\bigg)\cup\bigg(\bigcup_{L\in\mathcal L}Y_L\bigg)=V(D_2).
\]
Otherwise, there exists some vertex $x_n=y_m$, which prevents the existence of a nonempty proper subset $A\subset V(K)$ satisfying the conditions of Lemma~\ref{lemma6}. Under the above constraints, however, setting $A=\bigcup_{L\in\mathcal L}X_L$ and $B=\bigcup_{L\in\mathcal L}Y_L$ implies $D\in\mathcal H$ by Definition~\ref{def:H}, giving the desired contradiction.

All four cases yield a contradiction. Therefore, every digraph $D\notin\mathcal H$ that satisfies the given hypotheses is supereulerian, which completes the proof of (1).

To prove (2), similarly, construct the contracted digraph $D''$ from $D$ by contracting each \(D_i\) to a single vertex \(v_i\) for \(i\in[3]\) without parallel arcs. It is easy to check that, 
\begin{equation}\label{3}
{\text{either $D''$ contains a 3-cycle}}   
\end{equation}
\begin{equation}\label{4}
{\text{or, without loss of generality, $A(D'')=\{(v_1,v_2),(v_2,v_1),(v_2,v_3),(v_3,v_2)\}$}}   
\end{equation}

 Analogously to the proof of (1), $D$ is clearly supereulerian whenever $D''$ contains a $3$-cycle. We thus suppose that $D''$ satisfies (\ref{4}). Similarly, consider the spanning closed trail  $W=v_1v_2v_3v_2v_1$ in $D'$, which yields $l_2=2$. We can assume that $D$ is complete with $V(D_2)=\{a,b\}$, otherwise we may apply Lemma \ref{lemma4} and then Lemma \ref{lemma6} to obtain the desired spanning closed trail. Then, by Lemma~\ref{lemma5} and Lemma~\ref{lemma7}, we may assume without loss of generality that $x_1=x_2=a$ and $y_1=y_2=b$, where $x_1,x_2$ are in‑neighbours of both $D_1$ and $D_3$, and $y_1,y_2$ are out‑neighbours of $D_3$ and $D_1$. This gives $|(V(D_1)\cup V(D_3),b)_D|=0$ and $|(a,V(D_1)\cup V(D_3))_D|=0$, so $d^-(b)=d^+(a)=1$, contradicting the condition $\lambda(D)\geq 2$.

\end{proof}

	\section{Proof  of Theorem \ref{main}}

	 \begin{proof}[Proof of Theorem 1.2]
		Since each digraph $D\in \mathcal{H}$ satisfies $\lambda(D)=2$, hence it suffices to prove part (i). Sufficiency is obvious as if $D$ is supereulerian, then $D\notin\mathcal H$ by Theorem~\ref{theorem2.1}(iii). It remains to prove necessity. Suppose that $D\notin\mathcal H$. Consider the  underlying graph $G_J$ of the symmetric core $J(D)$. Since $\alpha(G_J)=\alpha_2(D)=4$, the graph $G_J$ has at most four connected components.  We distinguish cases according to the number of connected components.

		 If  $G_J$  is connected,  then  $J(D)$ is a connected symmetric spanning subdigraph of $D$ and hence is supereulerian. Therefore $D$ is supereulerian.

Suppose  that $G_J$ has two connected components $G_1$ and $G_2$.  Then $D[V(G_1)]$ and $D[V(G_2)]$ are vertex-disjoint  connected symmetric subdigraphs whose vertex sets partition $V(D)$. Since $D$ is strong,  there are arcs $(x_1,x_2)$ and $(x_3,x_4)$ with $x_1,x_4\in V(G_1)$ and $x_2,x_3\in V(G_2)$. By Lemma~\ref{lemma1} (with singleton components serving as trivial trails), $D[V(G_1)]$ has a spanning $(x_4,x_1)$‑trail $T^1$, and $D[V(G_2)]$ has a spanning $(x_2,x_3)$‑trail $T^2$.
  Hence 
$(x_1,x_2)T^2(x_3,x_4)T^1$ is a spanning closed trail of $D$; hence $D$ is supereulerian.

	 Suppose that $G_J$  has three connected components  $G_1,G_2,G_3$. Since $\alpha(G_J)=4$, we may assume that $\alpha(G_1)=\alpha(G_2)=1$ and $\alpha(G_3)=2$.  Thus $G_1$ and $G_2$ are complete. If $G_3$ has a bridge, then $\kappa(G_3)<2$, and Lemma~\ref{lemma:two-cliques} partitions $V(G_3)$ into two cliques. We then obtain four complete symmetric subdigraphs, so Lemma~\ref{mainlemma}(1) and the assumption $D\notin\mathcal H$ imply that  $D$ is supereulerian. If $G_3$ has no bridge, then the symmetric digraph $D[V(G_3)]$ has $\lambda (D[V(G_3)])\geq 2$, and Lemma~\ref{mainlemma}(2) yields the same conclusion.

	 Finally, suppose that $G_J$  has four connected components.  Since $\alpha(G_J)=4$, each component  has independence number one and hence  is complete.  Lemma~\ref{mainlemma}(1), together with $D\notin\mathcal H$, therefore implies that $D$ is supereulerian. Thus  $D$ is supereulerian  in every case.
	\end{proof}

	\section{Open  problems}
	In this paper, we characterize supereulerianity for strong digraphs $D$ with $\alpha_2(D)=4$. Very recently, Liu, Yang, Zhang and Lai \cite{LiuYangZhangLai2024} gave a characterization for the case $\alpha_2(D)=3$. It is therefore natural to consider the general characterization of supereulerianity for strong digraphs $D$ with $\alpha_2(D)=a$.

	\begin{problem}
		 For each integer $a\ge4$, does there exist an  exceptional family $\mathcal H_a$ such that every strong digraph  $D$ with $\alpha_2(D)=a$  and $\lambda(D)\ge a-2$ is supereulerian if and only if $D\notin\mathcal{H}_a$? In particular, is every strong digraph $D$ with $\alpha_2(D)=a$ and $\lambda(D)\ge a-1$ supereulerian?
	\end{problem}

	A substantially more difficult direction is to replace $\alpha_2(D)$ by the independence number $\alpha_0(D)$.

	\begin{problem}
		 For each integer $a\ge2$, determine whether there exists a function $g(a)$ such that every strong digraph  $D$  with  $\alpha_0(D)\le a$ and $\lambda(D)\ge g(a)$ is supereulerian.  If such a function exists, determine or bound the smallest possible $g(a)$.
 \end{problem}

	 Even the case $\alpha_0(D)=2$ remains open in closely related forms of the Bang-Jensen--Thomass\'e conjecture; see \cite{BangJensenMaddaloni2015,DongLiuMeng2020}.
\section*{Acknowledgments}	

 The author’s work is supported by National Natural Science Foundation 
of China (No. 12571373).

\end{document}